\documentclass[a4paper,draft,reqno]{amsart}
\usepackage[a4paper,margin=25mm]{geometry}
\usepackage[utf8]{inputenc}
\usepackage{mathtools}
\usepackage{amsfonts}
\usepackage{amsthm}
\usepackage{amsmath}
\usepackage{amssymb}
\usepackage{tikz}
\usepackage{hyperref}
\usepackage{authblk}

\usepackage{mathtools}

\newtheorem{Theorem}{Theorem}
\newtheorem{Proposition}{Proposition}

\newtheorem{Corollary}{Corollary}
\newtheorem{Algorithm}{Algorithm}
\newtheorem{Definition}{Definition}
\newtheorem{Example}{Example}

\def\eatspace#1{#1}
\def\step#1#2{\par\kern1pt\dimen44=#2em\advance\dimen44 1.67em\hangindent\dimen44\hangafter=1\noindent\rlap{\small#1}\kern\dimen44\relax\eatspace}
\let\set\mathbb
\def\<#1>{\langle#1\rangle}

\makeatletter
\def\testb#1{\testb@i#1,,\@nil}%
\def\testb@i#1,#2,#3\@nil{%
  \draw[->] (O) --++(#1);
  \ifx\relax#2\relax\else\testb@i#2,#3\@nil\fi}
\makeatother

\usepackage[backend=bibtex,firstinits=true]{biblatex}

\title[]{The Orbit-Sum Method\\ for Higher Order Equations}

\keywords{Lattice walks, generating functions, functional equations, orbit-sum method}

\usepackage{multicol}
\addbibresource{orbitSumMethod.bib}

\begin{document}

\maketitle

\begin{multicols}{2}
\begin{center}
\hspace{2cm}
\begin{tabular}{@{}c@{}}
    Manfred Buchacher\\
    Institute for Algebra\\ 
Johannes Kepler University Linz\\
    \normalsize manfredi.buchacher@gmail.com
  \end{tabular}%
  \end{center}
  \columnbreak
\begin{center}
\hspace{-2cm}
\begin{tabular}{@{}c@{}}
Manuel Kauers\\
Institute for Algebra\\
Johannes Kepler University Linz\\
    \normalsize manuel.kauers@jku.at
  \end{tabular}
  \end{center}
\end{multicols}

\begin{abstract}
The orbit-sum method is an algebraic version of the reflection-principle that was introduced by Bousquet-M\'{e}lou and Mishna to solve functional equations that arise in the enumeration of lattice walks with small steps restricted to $\mathbb{N}^2$. It proceeds by computing a set of algebraic substitutions that can be applied to a given functional equation, forming a linear combination of its transformed versions to the end of eliminating some of the unknowns, and eliminating further unknowns by discarding terms with negative powers. The extension of the orbit-sum method to walks with large steps was started by Bostan, Bousquet-M\'{e}lou and Melczer. They presented an algorithm that computes the minimal polynomials of the algebraic substitutions. We continue their work by explaining, among other things, how to perform computations in their splitting field on the level of ``formal'' algebraic extensions and how its elements can be interpreted as series. We thereby make use of the primitive element theorem, Gr\"{o}bner bases and the shape lemma, and the Newton-Puiseux algorithm. 
\end{abstract}

\section{Introduction}

Many generating functions can be described as solutions of certain functional equations.
One important type of such functional equations is the class of \textit{discrete differential equations} (DDE's). 
They arise in the context of the enumeration of lattice walks restricted to cones, whose systematic study was initiated in~\cite{marni,smallSteps}. 

Discrete differential equations are equations of the form
\begin{equation}\label{eq:DDE}
 F = P(x,y) + t Q(x,y,t,\Delta_x^k \Delta_y^{l} F: k,l\in\mathbb{N})
\end{equation}
where $F\in\mathbb{C}[x,y][[t]]$, $P\in\mathbb{C}[x,y]$ and $Q\in\mathbb{C}[x,y,t,v_{kl}: k,l\in\mathbb{N}]$. The operator $\Delta_x$ is the \textit{discrete derivative} with respect to $x$. It acts on $\mathbb{C}[x,y][[t]]$ by
\begin{equation*}
F(x,y;t) \mapsto \frac{F(x,y;t)-F(0,y;t)}{x}.
\end{equation*}
The operator $\Delta_y$ is the discrete derivative with respect to $y$ and defined analogously.
A DDE is a \textit{partial discrete differential equation} (PDDE) if it involves discrete derivatives of $F$ with respect to both $x$ and $y$, and an \textit{ordinary discrete differential equation} (ODDE) otherwise. The \textit{degree} of the equation is the total degree of $Q$ with respect to the $v_{kl}$'s. If its degree is at most $1$, it is a \textit{linear} DDE, otherwise it is \textit{non-linear}. If~$k+l$ is maximal among the discrete derivatives $\Delta_x^k\Delta_y^l F$ appearing in equation~(\ref{eq:DDE}), then $k+l$ is the \textit{order} of the equation.    

Any DDE has a unique solution $F\in\mathbb{C}[x,y][[t]]$ -- just consider the recurrence relation for $[t^n] F$ that results from extracting the coefficient of $t^n$. To solve a DDE means to decide whether its solution is \textit{algebraic}, \textit{D-finite} or \textit{D-algebraic}, and in case it is, to determine a polynomial or differential equation it satisfies.

There is a family of methods for solving DDE's~\cite{bousquet2006polynomial, bostan2022algorithms,bostan2023fast,buchacher2018inhomogeneous,notarantonio2022effective, smallSteps, bousquet2016elementary, bostan2010complete, bostan2017hypergeometric, mishna2009two,melczer2014singularity, large, raschel2020counting, bousquet2016square, bousquet2023enumeration, bousquet2024walks, bonnet2024galoisian, buchacher2020separating, buchacher2024separating, buchacher2024separated} that involve only operations such as
\begin{equation*}
+, \quad  \cdot,\quad \circ \qquad \text{and} \qquad [x^\geq] \quad \text{and} \quad [y^\geq],
\end{equation*}
that is, the addition, multiplication and composition of series, and the operation of discarding all terms of a series which involve negative powers in $x$ and $y$, respectively. The orbit-sum method~\cite{smallSteps,large} is one of these methods. It is used to solve linear DDE's, and proceeds in three steps. In the first step, a set of substitutions that can be applied to the given functional equation, the so-called \textit{orbit}, is determined.
In the second step, a linear combination of the various transformed versions of the functional equation
is formed to the end of eliminating all the evaluations of $F(x,0)$ and $F(0,y)$.
The resulting equation(s) then contain only the unknown series $F$ and various series obtained from the substitutions. In the third step of the method, by means of discarding terms with negative powers in $x$ and $y$, respectively, an expression for the unknown series $F$ is obtained.

\begin{Example}\label{example:simple}
We solve the equation
\begin{equation}\label{eq:DDE2}
 F = 1 + t (x+y) F + t \Delta_x F + t \Delta_y F
\end{equation}
for $F\in\mathbb{C}[x,y][[t]]$. It is equivalent to
\begin{equation}\label{eq:simple}
xy(1 - t S) F(x,y) = xy - tx F(x,0) - ty F(0,y),
\end{equation}
where $S := x+y+\bar{x}+\bar{y}$, and $\bar{x} := 1/x$ and $\bar{y}:=1/y$, and $F(x,y)\equiv F(x,y;t)$. We exploit the symmetry of $S$ and the fact that the unknowns on the right of the equation either do not depend on $x$ or on $y$. By iteratively performing the substitutions $x\mapsto \bar{x}$ and $y\mapsto \bar{y}$ that leave $S$ invariant we can derive three additional equations,
 \begin{align*}
  \bar{x} y (1-t S) F(\bar{x},y) &= \bar{x}y - t \bar{x} F(\bar{x};0) - ty F(0,y),\\
  \bar{x} \bar{y} (1-t S) F(\bar{x},\bar{y}) &= \bar{x}\bar{y} - t \bar{x} F(\bar{x};0) - t\bar{y} F(0,\bar{y}),\\
  x \bar{y}(1-t S) F(x,\bar{y}) &= x\bar{y} - t x F(x;0) - t\bar{y} F(0,\bar{y}),
 \end{align*}
which, together with equation~(\ref{eq:simple}), can be linearly combined to
 \begin{equation*}
  F(x,y) - \bar{x}^2 F(\bar{x},y) + \bar{x}^2\bar{y}^2 F(\bar{x},\bar{y}) - \bar{y}^2 F(x,\bar{y}) = 
  \frac{1 -\bar{x}^2 + \bar{x}^2\bar{y}^2 - \bar{y}^2}{1-t S}.
 \end{equation*}
Since $F(x,y)$ involves only non-negative powers of $x$ and $y$, and because all the other terms on the left-hand side of this equation in $\mathbb{C}[x,y,\bar{x},\bar{y}][[t]]$ involve a negative power of $x$ or a negative power in $y$, we find that
 \begin{equation}\label{eq:pos}
   F(x,y) = [x^\geq y^\geq] \frac{1 -\bar{x}^2 + \bar{x}^2\bar{y}^2 - \bar{y}^2}{1-t S}.
 \end{equation}
As rational functions are D-finite, and the class of D-finite functions is closed under applying $[x^\geq y^\geq]$~\cite{lipshitz}, it follows that $F$ is D-finite too.
\end{Example}

For linear partial discrete differential equations of higher order an algorithm for determining the orbit was presented in~\cite[Sec 3]{large}.
The substitutions determined by this algorithm are algebraic functions given by their minimal polynomials. 
Algebraic functions cause difficulties in the second and third step of the orbit sum method.
Based on~\cite[Sec~6.1]{buchacher2021algorithms}, and in continuation of~\cite{Rika}, we discuss these difficulties here.
In order to carry out the second step algorithmically (Sec~\ref{sec:orbit}), we need to construct an
algebraic function field that contains all the algebraic functions appearing in the orbit. This
step can be done on the level of ``formal'' algebraic extensions.
The third step however crucially depends on series interpretations of the algebraic functions,
so in order to carry it out algorithmically (Sec~\ref{sec:posPart}), we need to embed the algebraic
function field into suitably chosen fields of series. The question is then whether for the equation
at hand there exists an embedding that allows the orbit sum method to conclude. To answer this
question, we offer an algorithmic sufficient condition.

\section{Orbits, Orbit Equations, and the Orbit-Sum}\label{sec:orbit}

The substitutions we used to solve equation~\eqref{eq:DDE2} had the following property: for every substitution $(x',y')$, there were other substitutions $(x'',y'')$ and $(x''',y''')$ such that $x' = x''$ and $S(x',y') = S(x'',y'')$, and $y' = y'''$ and $S(x',y') = S(x''',y''')$. They allowed us to modify equation~\eqref{eq:DDE2} without altering~$S(x,y)$, and without altering one of the unknown evaluations of $F(x,0)$ and $F(0,y)$. These observations give rise to the notion of the $\textit{orbit}$ of a polynomial~\cite[Def~1]{large}. 
\begin{Definition}
Let $p\in\mathbb{C}[x,y,\bar{x},\bar{y}]$, and let $\sim$ be the smallest equivalence relation on~$\overline{\mathbb{C}(x,y)}^2$ such that
\begin{equation*}
(u_1,u_2)\sim (v_1,v_2)\quad \text{whenever} \quad u_1 = v_1 \text{ or } u_2 = v_2, \text{ and } p(u_1,u_2) = p(v_1,v_2).
\end{equation*}
The equivalence class of $(x,y)$ is called the orbit of $p$.
\end{Definition}

The elements of an orbit are pairs of algebraic functions which can be represented by their minimal polynomials. A (semi-)algorithm that determines them was presented in~\cite[Sec 3.2]{large}. It takes as input a Laurent polynomial and outputs, if the orbit is finite, the (pairs of) minimal polynomials of (the coordinates of) its elements. If the orbit is finite, then the splitting field of these minimal polynomials is a finite field extension of $\mathbb{C}(x,y)$. Using a constructive version of the primitive element theorem, we can perform computations in this field.
\begin{Theorem} (Primitive Element Theorem)
Let $K$ be a field of characteristic $0$, and let $L/K$ be a finite field extension. Then there is an $\alpha \in L$ such that $L=K(\alpha)$. If $m(X)\in K[X]$ is the minimal polynomial of $\alpha$, then
\begin{equation*}
L \cong K[X]/\langle m(X) \rangle.
\end{equation*}
\end{Theorem}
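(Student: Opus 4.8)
The plan is to follow the classical route: reduce to a two-generator extension, produce a primitive element as a generic linear combination of the two generators, and then read off the ring isomorphism from the first isomorphism theorem.

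First I would note that since $L/K$ is finite it is generated by finitely many elements, $L = K(\beta_1,\dots,\beta_n)$, each algebraic over $K$. An induction on $n$ reduces the existence of $\alpha$ to the following claim: if $L = K(\beta,\gamma)$ with $\beta,\gamma$ algebraic over $K$, then $L = K(\alpha)$ for some $\alpha\in L$. The case $n\le 1$ is trivial; for the inductive step the hypothesis writes $K(\beta_1,\dots,\beta_{n-1})$ as a simple extension $K(\delta)$, so that $L = K(\delta,\beta_n)$ and the claim applies.

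To prove the claim, let $f,g\in K[X]$ be the minimal polynomials of $\beta$ and $\gamma$. Because $\operatorname{char} K = 0$, both are separable: an irreducible polynomial $p$ with a repeated root would satisfy $\gcd(p,p')\neq 1$, forcing $p'=0$, which is impossible for non-constant $p$ in characteristic $0$. Fix an algebraic closure $\overline{K}$ and write $\beta=\beta_1,\dots,\beta_r$ and $\gamma=\gamma_1,\dots,\gamma_s$ for the (pairwise distinct) roots of $f$ and $g$ in $\overline{K}$. Since a field of characteristic $0$ is infinite, we may pick $c\in K$ avoiding the finitely many values $(\beta_i-\beta_1)/(\gamma_1-\gamma_j)$ with $j\neq 1$; set $\alpha:=\beta+c\gamma\in L$. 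Then $\gamma$ is a common root of $g(X)$ and of $h(X):=f(\alpha-cX)$, the latter lying in $K(\alpha)[X]$, while any root $\gamma_j$ of $g$ with $j\neq 1$ would give $f(\alpha-c\gamma_j)=0$, i.e.\ $\alpha-c\gamma_j=\beta_i$ for some $i$, contradicting the choice of $c$. Hence $\gcd(g,h)=X-\gamma$ when computed in $\overline{K}[X]$ — here separability of $g$ ensures $\gamma$ is a simple root — and since the gcd of two polynomials is unchanged under field extension, $X-\gamma$ already lies in $K(\alpha)[X]$. Therefore $\gamma\in K(\alpha)$, and then $\beta=\alpha-c\gamma\in K(\alpha)$, so $L=K(\beta,\gamma)=K(\alpha)$. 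For the isomorphism statement, since $\alpha$ is algebraic over $K$ the evaluation homomorphism $\varphi\colon K[X]\to L$, $X\mapsto\alpha$, has image $K[\alpha]=K(\alpha)=L$ and kernel a nonzero prime ideal of the principal ideal domain $K[X]$, hence of the form $\langle m(X)\rangle$ for its monic generator $m$, which is exactly the minimal polynomial of $\alpha$; the first isomorphism theorem gives $K[X]/\langle m(X)\rangle\cong L$.

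I expect the only genuinely delicate point to be the invariance of the gcd under field extension (equivalently, that $\gcd(g,h)$ computed over $K(\alpha)$ remains monic of degree $1$ over $\overline{K}$); everything else is bookkeeping. I would justify it by recalling that the gcd is produced by the Euclidean algorithm, whose steps involve only field operations on the coefficients and are therefore insensitive to the ambient field.
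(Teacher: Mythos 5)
Your proof is correct, and it is the standard argument for the primitive element theorem: reduce to two generators by induction, use separability (automatic in characteristic $0$) to choose a generic linear combination $\alpha = \beta + c\gamma$ that forces $\gcd(g, f(\alpha - cX)) = X - \gamma$, invoke invariance of the gcd under field extension to conclude $\gamma \in K(\alpha)$, and then apply the first isomorphism theorem to the evaluation map $K[X] \to L$. The one point worth making explicit (which you gesture at) is why $K[\alpha] = K(\alpha)$: since $m(X)$ is irreducible, $\langle m(X)\rangle$ is maximal, so the quotient $K[X]/\langle m(X)\rangle \cong K[\alpha]$ is already a field, whence $K[\alpha] = K(\alpha)$.

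There is, however, nothing in the paper to compare against: the paper states the Primitive Element Theorem as a classical background result (as a black box feeding into the Gr\"obner-basis/shape-lemma machinery used in Section~\ref{sec:orbit}) and gives no proof of its own. So the question of whether you take ``the same approach as the paper'' does not really arise; your write-up is a correct self-contained proof of a result the paper simply cites.
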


Given the minimal polynomial $m(X)$ of a primitive element $\alpha$ of the splitting field of a set of polynomials $m_1(X),\dots,m_n(X)$ over $\mathbb{C}(x,y)$, computations just amount to polynomial arithmetic in $\mathbb{C}(x,y)[X]/\langle m(X) \rangle$, that is, adding and multiplying polynomials over $\mathbb{C}(x,y)$, performing division with remainder and computing modular inverses using the extended Euclidean algorithm. It remains to clarify how the minimal polynomial of a primitive element can be found, and how elements of the splitting field can be expressed in terms of the primitive element. Gr{\"o}bner bases and the shape lemma~\cite[Thm~3.7.25]{kreuzer2000} provide an answer.

\begin{Definition}
Let $I\subseteq K[x_1,\dots,x_n]$ be a zero-dimensional ideal. It is said to be in normal $x_i$-position, $i\in\{1,\dots,n\}$, if any two zeros $(a_1,\dots,a_n)$ and~$(b_1,\dots,b_n)$ of $I$ in $\overline{K}^n$ satisfy~$a_i\neq b_i$.
\end{Definition}

\begin{Theorem}\label{theorem:prime} (Shape Lemma)
Let $K$ be a field of characteristic $0$, and let $I\subseteq K[x_1,\dots,x_n]$ be a $0$-dimensional radical ideal in normal $x_n$-position. Then $I$ has a Gr{\"o}bner basis with respect to lex order which is of the form
\begin{equation*}
\{x_1-g_1,\dots ,x_{n-1}-g_{n-1},g_n\}
\end{equation*}
 for some $g_1,\dots, g_n \in K[x_n]$. In particular, the set $\mathrm{Z}(I)$ of zeros of $I$ is 
 \begin{equation*}
 \mathrm{Z}(I) = \{ (g_1(a),\dots,g_{n-1}(a),a)\in K^n : g_n(a) = 0 \}.
 \end{equation*}
\end{Theorem}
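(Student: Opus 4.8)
The plan is to pass to the finite-dimensional quotient algebra $A := K[x_1,\dots,x_n]/I$, extract the univariate polynomial $g_n$ from the elimination ideal $I\cap K[x_n]$, and then argue that the residue class $\bar x_n$ already generates $A$ as a $K$-algebra, so that the other coordinates are forced to be polynomial expressions in $x_n$ modulo $I$; the lex Gröbner basis property then follows from a leading-term count.

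First I would set up the dimension bookkeeping. Since $I$ is $0$-dimensional, $A$ is a finite-dimensional $K$-vector space, and since $I$ is radical and $K$ has characteristic $0$ (hence is perfect), extending scalars keeps the ideal radical; by the Chinese Remainder Theorem $\overline K\otimes_K A\cong\prod_{p\in\mathrm{Z}(I)}\overline K$, so $\dim_K A=|\mathrm{Z}(I)|$. Next, $I\cap K[x_n]$ is a nonzero ideal of the PID $K[x_n]$, hence $\langle g_n\rangle$ for a unique monic $g_n$, which is precisely the minimal polynomial of $\bar x_n$ in $A$. Because elimination commutes with the extension $K\hookrightarrow\overline K$ and $I\overline K[x]$ is the vanishing ideal of $\mathrm{Z}(I)$, the polynomial $g_n$ generates the ideal of all polynomials in $x_n$ vanishing on $\mathrm{Z}(I)$, i.e. on the image $\pi_n(\mathrm{Z}(I))$ of the projection onto the last coordinate; thus $g_n=\prod_{c\in\pi_n(\mathrm{Z}(I))}(x_n-c)$, so $g_n$ is squarefree with $\deg g_n=|\pi_n(\mathrm{Z}(I))|$. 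The normal $x_n$-position hypothesis makes $\pi_n$ injective on $\mathrm{Z}(I)$, so $\deg g_n=|\mathrm{Z}(I)|=\dim_K A$.

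Then, since $K[\bar x_n]$ is a subalgebra of $A$ of dimension $\deg g_n=\dim_K A$, we get $K[\bar x_n]=A$. Hence for each $i\in\{1,\dots,n-1\}$ there is a $g_i\in K[x_n]$, of degree $<\deg g_n$, with $\bar x_i=g_i(\bar x_n)$, i.e. $x_i-g_i(x_n)\in I$. It remains to show $G:=\{x_1-g_1,\dots,x_{n-1}-g_{n-1},g_n\}$ is a lex Gröbner basis of $I$ for $x_1>\cdots>x_n$. Since $G\subseteq I$, the leading-term ideal $\langle x_1,\dots,x_{n-1},x_n^{\deg g_n}\rangle$ of $G$ is contained in $\mathrm{LT}(I)$; its standard monomials are $1,x_n,\dots,x_n^{\deg g_n-1}$, a set of exactly $\deg g_n=\dim_K A$ monomials, while the standard monomials of $\mathrm{LT}(I)$ form a $K$-basis of $A$, again $\dim_K A$ in number. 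An inclusion of finite sets of equal cardinality is an equality, so $\langle\mathrm{LT}(G)\rangle=\mathrm{LT}(I)$, $G$ is a Gröbner basis, and $\langle G\rangle=I$. The description of $\mathrm{Z}(I)$ then drops out: on a zero, $x_i-g_i(x_n)\in I$ forces $a_i=g_i(a_n)$ and $g_n(a_n)=0$, and conversely any root $a$ of $g_n$ gives a point annihilated by all of $G$, hence by $I$.

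The main obstacle — and the only place the hypotheses genuinely bite — is the dimension chain $\dim_K A=|\mathrm{Z}(I)|=\deg g_n$: radicality is what makes $g_n$ squarefree and $A$ reduced, and normal position is what converts $|\pi_n(\mathrm{Z}(I))|$ back into $|\mathrm{Z}(I)|$; one must also take a little care that radicality is preserved and that elimination commutes under $K\hookrightarrow\overline K$ (both from $K$ being perfect). Everything afterwards — that a full-dimensional subalgebra is the whole algebra, and the cardinality comparison of standard monomials — is formal.
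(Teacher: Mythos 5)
Your proof is correct and is, in essence, the standard textbook argument for the Shape Lemma. The key chain of equalities $\dim_K A = |\mathrm{Z}(I)| = |\pi_n(\mathrm{Z}(I))| = \deg g_n$ is established properly: radicality of $I$ together with perfectness of $K$ gives $\overline{K}\otimes_K A \cong \prod_{p\in\mathrm{Z}(I)}\overline{K}$ and hence $\dim_K A = |\mathrm{Z}(I)|$; normal $x_n$-position gives injectivity of the projection; and the identification of $g_n$ with $\prod_{c\in\pi_n(\mathrm{Z}(I))}(x_n-c)$ over $\overline{K}$ uses that elimination commutes with the flat base change $K\hookrightarrow\overline{K}$ and that elimination ideals of radical ideals are radical. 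From $\dim_K K[\bar x_n] = \dim_K A$ you correctly conclude $K[\bar x_n] = A$, producing the $g_i$'s, and the standard-monomial count then certifies $G$ as a lex Gröbner basis.

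There is, however, nothing in the paper to compare your proof against: the paper does not prove this statement. It imports the Shape Lemma by citation to Kreuzer and Robbiano, \emph{Computational Commutative Algebra 1}, Theorem~3.7.25, where essentially the same dimension-counting argument appears. So the appropriate assessment is simply that your blind reconstruction is a sound and complete proof of the cited result, matching the standard treatment. One small remark on the statement as written in the paper rather than on your proof: the zero set $\mathrm{Z}(I)$ should be read in $\overline{K}^n$ (as you implicitly do), not in $K^n$.
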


Assume that $m_1(X),\dots, m_n(X)\in\mathbb{C}(x,y)[X]$ are irreducible and pairwise distinct, and let $I$ be the ideal generated by $m_{i}(X_{ij})$ and $1- Y \prod_{ij\neq kl} (X_{ij}-X_{kl})$ and $ Z - \sum_{ij} a_{ij}X_{ij}$, where the $X_{ij}$'s and $Y$ and $Z$ are variables and $a_{ij}\in\mathbb{Q}$, for $i=1,\dots,n$ and $j=1,\dots,\deg_X(m_i)$. Without loss of generality we assume that all the assumptions of the shape lemma are satisfied as we can choose the $a_{ij}$'s such that $I$ is in normal $Z$-position~\cite[Def~3.7.21]{kreuzer2000} and replace $I$ by its radical $\sqrt{I}$ without altering the set of its zeros~\cite[Cor~3.7.16]{kreuzer2000}. The shape lemma implies that the Gr{\"o}bner basis of $I$ gives rise to a polynomial $m(X)\in\mathbb{C}(x,y)[X]$ whose roots $\alpha$ are primitive elements of the splitting field of $\{m_1(X),\dots, m_n(X)\}$ over $\mathbb{C}(x,y)$ and polynomials $p_{ij}(X)\in\mathbb{C}(x,y)[X]$ such that the roots of $m_1(X),\dots, m_n(X)$ are given by the $p_{ij}(\alpha)$'s. 

Let $P\in\mathbb{C}[x,y]$ and $Q_{kl}\in\mathbb{C}[x,y]$ be polynomials, and let
\begin{equation}\label{eq:kernelEquation}
F = P(x,y) + t \sum_{k,l} Q_{kl}(x,y) \Delta_x^k \Delta_y^l F
\end{equation}
be a linear discrete differential equation for $F\in\mathbb{C}[x,y][[t]]$. The \textit{kernel polynomial} of the equation is the Laurent polynomial that appears as the coefficient of $F(x,y)$ when all terms involving it are collected on the left hand side of the equation. The orbit of the equation is the orbit of its kernel polynomial. If it is finite, we can now assume that there is some $\alpha\in\overline{\mathbb{C}(x,y)}$ such that its elements are of the form $(p_1(\alpha),p_2(\alpha))$ and given in terms of $p_1(X),p_2(X)\in\mathbb{C}(x,y)[X]$ and the minimal polynomial~$m(X)\in\mathbb{C}(x,y)[X]$ of $\alpha$. The orbit equations result from replacing $(x,y)$ in equation~\eqref{eq:kernelEquation} by the elements of the orbit, and an orbit-sum is any $\mathbb{C}(x,y)[\alpha]$-linear combination of the orbit equations that does not involve any of the sections $F(\cdot,0)$ and $F(0,\cdot)$. Computing a basis of the vector space of such equations amounts to making an ansatz with undetermined coefficients for the linear combination, setting the coefficients of the sections equal to zero, and solving a system of linear equations over the field $\mathbb{C}(x,y)[X]/\langle m(X) \rangle$.

\section{Positive-Part-Extraction}\label{sec:posPart}

In the previous section we recalled how the minimal polynomials of the algebraic substitutions required by the orbit-sum method are determined and explained how Gr\"{o}bner bases and the shape lemma allow one to reduce computations in their splitting field to polynomial arithmetic. As a consequence the first two steps of the orbit-sum method can be performed algorithmically, the result of the computations being a basis of the vector space of section-free orbit equations whose elements are of the form 
\begin{equation}\label{eq:orbitEquation}
F(x,y) + \sum_{(p_1,p_2,p_3)} p_3(\alpha) F(p_1(\alpha),p_2(\alpha)) = p(\alpha),
\end{equation}
where $F\in\mathbb{C}[x,y][[t]]$ is unknown, $\alpha$ is an element of $\overline{\mathbb{C}(x,y)}$, given by its minimal polynomial over $\mathbb{C}[x,y]$, and~$p_1(\alpha), p_2(\alpha)$ and $p_3(\alpha)$ are polynomials in $\alpha$ over $\mathbb{C}(x,y)$, and $p(\alpha)$ is a polynomial in $\alpha$ over $\mathbb{C}(x,y,t)$. The purpose of this section is to give a meaning to 
\begin{equation}\label{eq:posPart}
[x^\geq y^\geq] p_3(\alpha) F(p_1(\alpha),p_2(\alpha)),
\end{equation}
and to present a sufficient and effective condition for equation~\eqref{eq:orbitEquation} to imply that 
\begin{equation*}
F(x,y) = [x^\geq y^\geq] p(\alpha).
\end{equation*}
This requires to interpret elements of $\overline{\mathbb{C}(x,y)}$ as series in $x$ and $y$. The non-negative part is then the series which results from discarding all terms which involve a negative power of $x$ or $y$, respectively. We did not stress this point in Example~\ref{example:simple} because the right-hand side of equation~\eqref{eq:pos} can unambiguously be understood as an element of~$\mathbb{C}[x,y,\bar{x},\bar{y}][[t]]$ whose non-negative part with respect to $x$ and $y$ is well-defined. In general, however, more care is necessary.

\begin{Example}
It is ambiguous to speak of the non-negative part of the series solution $Y$ of 
\begin{equation*} 
(1-x)Y-1=0.
\end{equation*}
The solution of the equation depends on the field of Laurent series over which it is solved. While in $\mathbb{C}((x))$ it is $Y = \sum_{k=0}^\infty x^k$, in $\mathbb{C}((\bar{x}))$ it is $Y = -\sum_{k=1}^\infty \bar{x}^k$, and depending on which of them we choose, we have $[x^\geq] Y = Y$ or $[x^\geq] Y = 0$.
\end{Example}

To give a meaning to expression~\eqref{eq:posPart} we embed the splitting field $\mathbb{C}(x,y)[X]/\langle m(X) \rangle$ into a field of Puiseux series. Our reasoning is based on~\cite{Manuel}, an exposition of a theory of Laurent series in several variables, and on~\cite{MacDonald,buchacher2022newton}, a discussion of a (generalized) Newton-Puiseux algorithm. For details, in particular for proofs, we refer to these references. 

We recall some definitions from convex geometry before we introduce the series we will work with.

\begin{Definition}
A subset $C$ of $\mathbb{R}^n$ is called a cone if $\lambda C = C$ for every $\lambda\in\mathbb{R}_{\geq 0}$. It is called a polyhedral cone if there are $v_1,\dots,v_k \in \mathbb{R}^n$ such that $C = \mathrm{cone}\{v_1,\dots, v_k\} := \mathbb{R}_{\geq 0} v_1 + \dots + \mathbb{R}_{\geq 0} v_k$, and rational if $v_1,\dots,v_k$ can be chosen to be elements of $\mathbb{Q}^n$. A cone $C$ is called convex if $\lambda v + (1-\lambda) w\in C$ for all $v,w\in C$ and all $\lambda\in[0,1]$, and strictly convex if, in addition, $C\cap (-C) = \{0\}$. The dual $C^*$ of $C$ is $C^*=\{ u\in\mathbb{R}^n \mid  u \cdot C \leq 0 \}$.
\end{Definition}

A \emph{series} $\phi$ in $\bold{x} = (x_1,\dots,x_n)$ over~$\mathbb{C}$ is a formal sum 
\begin{equation*}
\phi = \sum_{I\in\mathbb{Q}} a_I \bold{x}^I 
\end{equation*}
of terms in $\bold{x}$ whose coefficients $a_I$ are elements of $\mathbb{C}$.
Its \emph{support} is defined by 
\begin{equation*}
\mathrm{supp}(\phi) = \{I\in\mathbb{Q}^n: a_I \neq 0 \},
\end{equation*}
and we will assume throughout that there is a vector~$v\in\mathbb{R}^n$, a strictly convex rational cone $C\subseteq\mathbb{R}^n$ and an integer $k\in\mathbb{Z}$ such that 
\begin{equation*}
\mathrm{supp}(\phi) \subseteq \left( v + C \right) \cap \frac{1}{k}\mathbb{Z}^n.
\end{equation*}
 The convex hull of $\mathrm{supp}(\phi)$ is called the \emph{Newton polyhedron} of $\phi$. It is denoted by $\mathrm{Newt}(\phi)$. 
 
Given an additive total order $\preceq$ on $\mathbb{Q}^n$, we denote by $\mathbb{C}_{\preceq}((\bold{x}))$ the set of series whose support have a maximal element with respect to $\preceq$. The proof of~\cite[Thm 15]{Manuel} shows that $\mathbb{C}_{\preceq}((\bold{x}))$ is a field, and by~\cite{MacDonald} it is algebraically closed.

The next theorem~\cite[Thm~4]{robbiano1985} gives a characterization of additive total orders on $\mathbb{Q}^n$.

\begin{Definition}
Let $w\in\mathbb{R}^n$. The rational dimension of $w$, denoted by $\mathrm{d}(w)$, is the dimension of the $\mathbb{Q}$-vector space generated by the components of $w$.
\end{Definition}
\begin{Theorem}
For any additive total order $\preceq$ on $\mathbb{Q}^n$, there exist non-zero pairwise orthogonal vectors $u_1,\dots,u_s\in\mathbb{R}^n$ such that $\mathrm{d}(u_1) + \dots + \mathrm{d}(u_s) = n$ and
\begin{equation*}
\iota: (\mathbb{Q}^n, \preceq) \rightarrow (\mathbb{R}^s, \preceq_{lex}) \quad \text{defined by} \quad \iota(v) = (v\cdot u_1,\dots, v\cdot u_s)
\end{equation*}
is an injective order homomorphism.
\end{Theorem}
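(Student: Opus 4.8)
The plan is to reduce the statement to the structure theorem for finitely generated subgroups of $(\mathbb{R},+)$ together with an inductive decomposition of the order. Let $\preceq$ be an additive total order on $\mathbb{Q}^n$. The first step is to reinterpret $\preceq$ via its positive cone $P = \{v \in \mathbb{Q}^n : 0 \preceq v\}$, which is a sub-semigroup of $\mathbb{Q}^n$ whose union with $-P$ is all of $\mathbb{Q}^n$ and whose intersection with $-P$ is $\{0\}$. Because $\mathbb{Q}^n$ is a $\mathbb{Q}$-vector space and $\preceq$ is compatible with addition, $\preceq$ extends uniquely to an additive total order on $\mathbb{Q}^n$ over $\mathbb{Q}$ (scaling by positive rationals preserves the order), so we may think of $\preceq$ as an ordering of the $\mathbb{Q}$-vector space $\mathbb{Q}^n$.

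Next I would construct the vectors $u_1,\dots,u_s$ one at a time, peeling off the ``archimedean classes'' of the order. Call $v,v' \in \mathbb{Q}^n$ \emph{equivalent} if neither is infinitely large compared to the other, i.e.\ there exist positive integers $m,m'$ with $|v| \preceq m|v'|$ and $|v'| \preceq m'|v|$; the equivalence classes together with $\{0\}$ are linearly ordered (this is the classical archimedean-class construction for ordered abelian groups). The top class, together with $0$, spans a $\mathbb{Q}$-subspace $V_1 \subseteq \mathbb{Q}^n$ on which $\preceq$ restricts to an \emph{archimedean} ordered $\mathbb{Q}$-vector space; by Hölder's theorem such an ordering embeds order-preservingly into $(\mathbb{R},\leq)$, i.e.\ there is a linear functional $\ell_1 : V_1 \to \mathbb{R}$ with $\ell_1(v) > 0 \iff 0 \prec v$ for $v \in V_1$. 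Extend $\ell_1$ to all of $\mathbb{Q}^n$ by zero on a $\preceq$-compatible complement and realize it as $v \mapsto v \cdot u_1$ for a suitable $u_1 \in \mathbb{R}^n$; note $\mathrm{d}(u_1) = \dim_\mathbb{Q} V_1 = \dim_\mathbb{Q} \mathrm{im}(\ell_1)$, since an archimedean ordered $\mathbb{Q}$-vector space of dimension $d$ maps to a $d$-dimensional $\mathbb{Q}$-subspace of $\mathbb{R}$ (the kernel would contradict archimedeanicity). Then pass to the quotient $\mathbb{Q}^n / V_1$ with the induced order and iterate; orthogonality of the $u_i$ is arranged by choosing, at each stage, the complement orthogonal to all previously produced functionals, which is possible because we work in $\mathbb{R}^n$ with its standard inner product. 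The process terminates after $s \leq n$ steps with $\sum_i \mathrm{d}(u_i) = \sum_i \dim_\mathbb{Q} V_i = n$, and by construction $\iota(v) = (v\cdot u_1,\dots,v\cdot u_s)$ is injective (if all coordinates vanish then $v$ lies in the intersection of all the $V_i$'s kernels, forcing $v = 0$) and order-preserving into $(\mathbb{R}^s,\preceq_{lex})$, hence into $(\mathbb{R}^n,\preceq_{lex})$ after padding with zeros.

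The main obstacle I expect is the careful bookkeeping in the inductive step: one must verify that the ``top archimedean class'' really does span a subspace that is $\preceq$-convex (so that the quotient order is well defined and total), that Hölder's embedding can be chosen compatibly and then realized by a genuine vector $u_i \in \mathbb{R}^n$ rather than an abstract functional, and — the subtlest point — that the dimension count $\mathrm{d}(u_i) = \dim_\mathbb{Q} V_i$ is exact. That last equality is where archimedeanicity is essential: a non-trivial $\mathbb{Q}$-linear relation among the components of $u_i$ would mean $\ell_i$ has a non-zero kernel inside $V_i$, but any non-zero vector in that kernel would be ``infinitely small'' relative to some other vector of $V_i$ in the induced real ordering, contradicting that $V_i$ was an archimedean class. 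Everything else is an assembly of standard facts (Hölder's theorem, the archimedean-class filtration of an ordered abelian group, Gram–Schmidt), so once the inductive skeleton is in place the verification is routine.
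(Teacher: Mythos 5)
The paper does not actually supply a proof of this statement; it is quoted from Robbiano~(1985). So there is no proof in the paper to compare against, and I can only assess your argument on its own terms. Your toolkit is the right one --- the archimedean-class filtration of a finite-rank ordered abelian group, H\"older's theorem to embed archimedean pieces into $\mathbb{R}$, and a Gram--Schmidt step for orthogonality. This is essentially the standard route to Robbiano's theorem.

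There is, however, a genuine error in the inductive step, and it breaks the argument as written. You claim that \emph{``the top archimedean class, together with $0$, spans a $\mathbb{Q}$-subspace $V_1\subseteq\mathbb{Q}^n$ on which $\preceq$ restricts to an archimedean ordering.''} This is false. Take $n=2$ and $\preceq$ the lexicographic order on $\mathbb{Q}^2$. The top archimedean class is $\{(a,b):a\neq 0\}$, whose $\mathbb{Q}$-linear span (indeed, whose generated subgroup) is all of $\mathbb{Q}^2$; and $\preceq$ on $\mathbb{Q}^2$ is not archimedean. More generally, the top archimedean class is exactly the complement of the maximal proper convex subgroup $H\subsetneq\mathbb{Q}^n$, so its span is all of $\mathbb{Q}^n$ whenever $H\neq\{0\}$. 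The object that is archimedean is not a subspace spanned by the top class but the \emph{quotient} $\mathbb{Q}^n/H$. The fix is to run the filtration through the chain of convex subgroups $\{0\}=H_0\subsetneq H_1\subsetneq\cdots\subsetneq H_s=\mathbb{Q}^n$: apply H\"older to the archimedean quotient $\mathbb{Q}^n/H_{s-1}$ to obtain $\ell_1$, realize $\ell_1$ as $v\mapsto v\cdot u_1$ (so $\ker\ell_1\cap\mathbb{Q}^n=H_{s-1}$ and $\mathrm{d}(u_1)=\dim_{\mathbb Q}(\mathbb{Q}^n/H_{s-1})$), then restrict $\preceq$ to $H_{s-1}$ and iterate, orthogonalizing each new $u_i$ against the earlier ones (this does not change $v\cdot u_i$ for $v\in H_{s-i}$, since those $v$ kill the earlier $u_j$'s). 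Injectivity of $\iota$ then follows because $\iota(v)=0$ forces $v\in H_{s-1}\cap H_{s-2}\cap\cdots\cap H_0=\{0\}$, and the dimension count $\sum_i\mathrm{d}(u_i)=\sum_i\dim_{\mathbb Q}(H_{s-i+1}/H_{s-i})=n$ follows from injectivity of each H\"older embedding, exactly as you argue. With $V_1$ replaced by this quotient construction, the rest of your sketch goes through.
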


For simplicity, but without loss of generality, we restrict ourselves to total orders $\preceq$ on $\mathbb{Q}^n$ that are induced by some~$w\in\mathbb{R}^n$ such that
\begin{equation*}
\alpha \preceq \beta \quad \Longleftrightarrow \quad  \alpha \cdot w \leq \beta \cdot w.
\end{equation*}

Having chosen a total order $\preceq$ on $\mathbb{Q}^2$ we can identify the field of rational functions~$\mathbb{C}(x,y)$ with a subfield of $\mathbb{C}_{\preceq}((x,y))$. The series in $\mathbb{C}_{\preceq}((x,y))$ associated with a rational function $p/q\in\mathbb{C}(x,y)$ is 
\begin{equation}\label{eq:rat}
\frac{p}{\mathrm{lt}_{\preceq}(q)} \sum_{k \geq 0} \left( 1- \frac{q}{\mathrm{lt}_{\preceq}(q)}\right)^k.
\end{equation}
Viewing $\mathbb{C}(x,y)$ as a subfield of~$\mathbb{C}_{\preceq}((x,y))$, any series root $\phi\in\mathbb{C}_{\preceq}((x,y))$ of $m(X)$ induces an embedding of $\mathbb{C}(x,y)[X]/ \langle m(X) \rangle$ into $\mathbb{C}_{\preceq}((x,y))$ via
\begin{align*}
p(X) + \langle m(X) \rangle \quad \mapsto \quad  p(\phi).
\end{align*}
This embedding allows us to study equation~\eqref{eq:orbitEquation} in the form
\begin{equation}\label{eq:orbitEquation1}
F(x,y) + \sum_{(p_1,p_2,p_3)} p_3(\phi) F(p_1(\phi),p_2(\phi)) = p(\phi),
\end{equation}
which involves only series to which $[x^\geq y^\geq]$ can be applied. The question how such series roots, and hence such embeddings, can be constructed is answered by the Newton-Puiseux algorithm. We only state the specification of the algorithm here and refer to~\cite{MacDonald,buchacher2022newton} for details. We first recall several definitions. 

Let $p\in\mathbb{C}[\bold{x},y]$, and let $e$ be an edge of $\mathrm{Newt}(p)$ that connects two vertices $v_1$ and $v_2$. The edge is called \emph{admissible}, if $v_{1,n+1}\neq v_{2,n+1}$. If $v_{1,n+1} < v_{2,n+1}$, we call $v_1$ and $v_2$ the minor and major vertex of~$e$, respectively, and denote them by~$\mathrm{m}(e)$ and $\mathrm{M}(e)$. Let $P_e$ be the projection on $\mathbb{R}^{n+1}$ that projects on~$\mathbb{R}^n\times \{0\}$ along lines parallel to an (admissible) edge $e$. The \emph{barrier cone} of~$e$ is the smallest cone that contains $P_e(\mathrm{Newt}(p)) - P_e(e)$, the projection of the Newton polytope of $p$ shifted by the projection of its edge $e$. It is denoted by $C(e)$. We occasionally identify $\mathbb{R}^n$ and $\mathbb{R}^n\times \{0\}\subseteq \mathbb{R}^{n+1}$, and consider $C(e)$ as a subset of $\mathbb{R}^n$. A vector $w\in\mathbb{R}^n$ that defines a total order on $\mathbb{Q}^n$ is said to be \emph{compatible} with $e$, if $w\in C(e)^*$. The edge polynomial $p_e(t)$ of an edge $e$ of the Newton polytope of $p$ is 
\begin{equation*}
p_e(t)=\sum_{I} a_I t^{I_{n+1}-\mathrm{m}(e)_{n+1}},
\end{equation*}
where $a_I = [(\bold{x},y)^I]p$ and the sum runs over all $I$ in~$e\cap \mathrm{supp}(p)$.

\begin{Algorithm}[Newton-Puiseux Algorithm]\label{alg:NPA}
Input: a square-free and non-constant polynomial $p\in\mathbb{C}[\bold{x},y]$, an admissible edge $e$ of its Newton polytope, an element $w$ of the dual of its barrier cone $C(e)$ inducing a total order on $\mathbb{Q}^n$, and a (non-negative) integer $k$.\\
  Output: a list of $\mathrm{M}(e)_{n+1}-\mathrm{m}(e)_{n+1}$ many pairs $(c_1\bold{x}^{\alpha_1}+\dots+c_N \bold{x}^{\alpha_N},C)$ with $c_1\bold{x}^{\alpha_1},\dots,c_N\bold{x}^{\alpha_N}$ being the first $N$ terms of a series solution $\phi$ of $p(\bold{x},\phi) = 0$, ordered with respect to $w$ in decreasing order, and $C$ being a strictly convex rational cone such that $\mathrm{supp}(\phi)\subseteq \{\alpha_1,\dots,\alpha_{N-1}\} \cup \left(\alpha_N + C\right)$, where $N$ is the smallest integer greater than or equal to $k$ such that the series solutions can be distinguished by their first $N$ terms.
\end{Algorithm}

For the purpose of this paper, we only need to know \emph{what} the algorithm does and not \emph{how} it does it, and therefore refer to~\cite{MacDonald,buchacher2022newton} for the steps of the algorithm. The following proposition is a consequence of the correctness of Algorithm~\ref{alg:NPA}. See~\cite[Prop~1]{buchacher2022newton} for a proof, and~\cite[Corollary~4.1]{MacDonald} for a similar statement.
\begin{Proposition}\label{prop:algClosure}
Let $p\in\mathbb{C}[\bold{x},y]$, and let $w\in\mathbb{R}^n$ define a total order $\preceq$ on $\mathbb{Q}^n$. Then $\mathbb{K}_{\preceq}((\bold{x}))$ contains $\deg_y (p)$ many series roots of $p$ all of which can be computed by Algorithm~\ref{alg:NPA}.
\end{Proposition}

Whether the orbit-sum method concludes, and hence $F(x,y) = [x^\geq y^\geq] p(\phi)$, may depend on the total order $\preceq$ and the series root $\phi$ of $m(X)$ in $\mathbb{C}_\preceq((x,y))$, that is, the embedding of $\mathbb{C}(x,y)[X] / \langle m(X) \rangle$ into a field of Puiseux series. As there are infinitely many different fields of Puiseux series, a priori there are infinitely many different embeddings to consider. However, the number of embeddings can be narrowed down as we are not interested in the embeddings themselves but only in the series given elements of $\mathbb{C}(x,y)[X] / \langle m(X) \rangle$ are mapped to.

\begin{Example}\label{ex:rational}
Although there are infinitely many different fields of Puiseux series the multiplicative inverse of $q := 1+x+y$ can be considered an element of, there are only three series it can be identified with. As pointed out before, the corresponding series in $\mathbb{C}_\preceq((x,y))$ is $\frac{1}{\mathrm{lt}_\preceq(q)}\sum_k \left( 1 - q/\mathrm{lt}_\preceq(q) \right)^k$, and depending on what $\mathrm{lt}_\preceq(q)$ is, it is either 
\begin{equation*}
\sum_k (-1)^k(x+y)^k, \quad \overline{x} \sum_k (-1)^k (\overline{x}y + \overline{x})^k \quad \text{or} \quad \overline{y} \sum_k (-1)^k (x\overline{y} + \overline{y})^k.
\end{equation*}
\end{Example}
As a consequence of these observations, we do not work with single total orders but families thereof. 

\begin{Definition}
Let $\phi$ be a series. The smallest rational cone containing the elements of $\mathbb{R}^n$ that induce a total order on $\mathbb{Q}^n$ for which $\mathrm{supp}(\phi)$ has a maximal element is called the order cone of $\phi$. It is denoted by $C(\phi)$.
\end{Definition}

The following proposition is immediate.
\begin{Proposition}
Let $\preceq$ be a total order on $\mathbb{Q}^n$, and let $w\in\mathbb{R}^n$ be a vector it is induced by. Then 
\begin{equation*}
\phi \in\mathbb{C}_\preceq((\bold{x})) \quad \Longleftrightarrow \quad w\in C(\phi).
\end{equation*} 
\end{Proposition}

\begin{Definition}
Let $P\subseteq \mathbb{R}^n$. The recession cone of $P$ is 
\begin{equation*}
\mathrm{recCone}(P) := \left \{  v\in \mathbb{R}^n : P + \mathbb{R}_{\geq 0} v \subseteq P \right\}
\end{equation*}
\end{Definition}

\begin{Proposition}\label{prop:orderCone}
Let $\phi$ be a series. Then 
\begin{equation*}
C(\phi) = \mathrm{recCone}(\mathrm{Newt}(\phi))^*. 
\end{equation*}
\end{Proposition}
\begin{proof}
Any polyhedral set $P$ is the Minkowski sum of the convex hull of its vertices and its recession cone~\cite[Thm~1.2]{ziegler2012lectures}. Hence a linear functional $w$ has a maximum on $P$ if and only if it has one on its recession cone.
\end{proof}

The computation of $C(\phi)$ requires to compute the unbounded faces $\mathrm{Newt}(\varphi)$. For series that are algebraic over $\mathbb{C}(\bold{x})$, this is still an open problem~\cite[Prob~2]
{buchacher2022newton}. However, for rational series this is easy.

\begin{Definition}
Let $P\subseteq\mathbb{R}^n$ be a polyhedron, and let $v\in P$ be a vertex. The smallest cone $C\subseteq\mathbb{R}^n$ such that 
\begin{equation*}
P \subseteq v + C 
\end{equation*}
is called the vertex cone of $P$ at $v$. It is denoted by $C(P,v)$.
\end{Definition}

\begin{Corollary}
Let $p/q\in\mathbb{C}(\bold{x})$, let $\preceq$ be a total order on $\mathbb{Q}^n$, and let $\phi$ be the series in $\mathbb{C}_\preceq((\bold{x}))$ associated with $p/q$. Then 
\begin{equation*}
C(\phi) = C(\mathrm{Newt}(q),\mathrm{lexp}_\preceq(q))^*.
\end{equation*}
\end{Corollary}
\begin{proof}
The explicit expression of $\phi$ given in~\eqref{eq:rat} shows that
\begin{equation*}
\mathrm{recCone}\left(\mathrm{Newt}(\phi)\right) = C(\mathrm{Newt}(q),\mathrm{lexp}_\preceq(q)). 
\end{equation*} 
So the statement follows from Proposition~\ref{prop:orderCone}.   
\end{proof}

\begin{Example}
We continue with Example~\ref{ex:rational}. Depending on the series the rational function $p/q$ is identified with, a total order on $\mathbb{Q}^2$ can be associated with one of three sets: the duals of the vertex cones of the Newton polytope of $q$, that is, the duals of 
\begin{equation*}
\mathrm{cone}\{(1,0), (0,1)\}, \quad \mathrm{cone}\{(-1,1), (-1,0)\} \quad \text{and} \quad \mathrm{cone}\{(1,-1), (0,-1)\}.
\end{equation*}  
\end{Example}

\begin{Definition}
Two series $\phi$, $\psi$ are said to be compatible if there is a field $\mathbb{C}_\preceq((\bold{x}))$ of Puiseux series both are elements of.
\end{Definition}

\begin{Proposition}
Two series $\phi$ and $\psi$ are compatible if and only if $\mathrm{int}\left( C(\phi) \cap C(\psi)\right)$, the interior of the intersection of their order cones, is non-empty. Furthermore, $\mathrm{int}\left(C(\phi) \cap C(\psi) \right) \neq \emptyset$ if and only if $\mathrm{recCone}(\mathrm{Newt}(\phi)) + \mathrm{recCone}(\mathrm{Newt}(\psi))$ is strictly convex.
\end{Proposition}
\begin{proof}
The first part of the statement is an immediate consequence of the definitions. The other part follows from Proposition~\ref{prop:orderCone}, the fact that for two cones $C_1$ and $C_2$ we have 
\begin{equation*}
C_1^* \cap C_2^* = \left( C_1 + C_2\right)^*,
\end{equation*} 
and the observation that $(C_1+C_2)^*$ has full dimension if and only if $C_1+C_2$ is strictly convex.
\end{proof}

Given a finite set of rational functions one can wonder about the different series expansions that are compatible with each other. It turns out that there is a simple description in terms of normal fans.

\begin{Definition}
A polyhedral complex $\Sigma$ is a set of polyhedra satisfying the following two properties: If $P\in\Sigma$ and $F$ is a face of $P$, then $F\in\Sigma$. If $P, Q\in\Sigma$, then $P\cap Q$ is either the empty set, or a face of $P$ and $Q$.
\end{Definition}

\begin{Definition}
Let $P$ be a polyhedron, and let $F$ be one of its faces. The normal cone of $F$ is 
\begin{equation*}
C_F = \{ w\in\mathbb{R}^n : F\subseteq  \underset{x\in P}{\operatorname{argMax}} \hspace{4pt} w\cdot x \}.
\end{equation*}
The normal fan of $P$ is the polyhedral fan whose elements are the normal cones of the faces of $P$.
\end{Definition}

\begin{Definition}
Let $\Sigma_1$ and $\Sigma_2$ be two polyhedral complexes. Their common refinement is 
\begin{equation*}
\Sigma_1 \wedge \Sigma_2 = \left\{ C\cap C' : C\in \Sigma_1, C'\in\Sigma_2 \right\}.
\end{equation*}
\end{Definition}

\begin{Proposition}
There is a bijection between the compatible series expansions of $p_1/q_1, \dots, p_n/q_n \in\mathbb{C}(\bold{x})$ and the $n$-dimensional faces of the common refinement of the normal fans of $\mathrm{Newt}(q_1), \dots, \mathrm{Newt}(q_n)$.
\end{Proposition}
\begin{proof}
There is a bijection between the series expansions of a rational function $p/q$ and the vertices of $\mathrm{Newt}(q)$. Their order cones are given by the duals of the corresponding vertex cones. They form the $n$-dimensional faces of the normal fan of $\mathrm{Newt}(q)$. Series expansions $\phi_1,\dots, \phi_n$ of $p_1/q_1, \dots, p_n/q_n$ are compatible if and only if the interior of $C(\phi_1)\cap \dots \cap C(\phi_n)$ is non-empty. This is the case if and only if $C(\phi_1)\cap \dots \cap C(\phi_n)$ is an $n$-dimensional face of the common refinement of the normal fans associated with the Newton polytopes of $q_1,\dots,q_n$.
\end{proof}

\begin{Example}
We explain how to compute the compatible series expansion of $1/(1+x+y)$ and $1/(x+y+xy)$. Each rational function has three series expansions. However, not all of them are compatible. To determine those which are we consider the normal fans of the Newton polygons of $1+x+y$ and $x+y+xy$ and their common refinement (see the figure above). The latter consists of six $2$-dimensional cones each of which corresponds to a pair of compatible series expansions. To determine the one associated with $\mathrm{cone}\{(-1,-1), (0,-1)\}$, for instance, we choose any of its elements that induces a total order $\preceq$, and compute the series expansions in $\mathbb{C}_\preceq((x,y))$:
\begin{equation*}
\sum_k (-1)^k (x+y)^k \quad \text{and} \quad x^{-1} \sum_k (-1)^k (x^{-1}y+y)^k.
\end{equation*}

\end{Example}

\begin{figure}
\begin{center}
  \begin{tikzpicture}[scale=.2]
    \fill[lightgray] (-2,-2)--(4,-2)--(-2,4)--cycle;
    \draw[-](0,0)--(5,5);
    \draw[-](0,0)--(0,-5);
    \draw[-](0,0)--(-5,0);
 \begin{scope}[xshift=20cm]
   \fill[lightgray] (2,2)--(-4,2)--(2,-4)--cycle;
    \draw[-](0,0)--(-5,-5);
    \draw[-](0,0)--(0,5);
    \draw[-](0,0)--(5,0);
    \end{scope}
     \begin{scope}[xshift=40cm]
    \draw[-](0,0)--(5,5);
    \draw[-](0,0)--(0,-5);
    \draw[-](0,0)--(-5,0);
    \draw[-](0,0)--(-5,-5);
    \draw[-](0,0)--(0,5);
    \draw[-](0,0)--(5,0);
    \end{scope}
  \end{tikzpicture}
\end{center}
  \caption{The normal fans of the Newton polygons of $1+x+y$ and $x+y+xy$ and their common refinement.}
\end{figure}

To determine all series interpretations of equation~\eqref{eq:orbitEquation}, we need to clarify how to compute all series solutions of $m(X) = 0$. Given a total order $\preceq$, we can compute all solutions that are elements of $\mathbb{C}_\preceq((x,y))$. However, these may not be all. There might be more as $\preceq$ ranges over the total orders of $\mathbb{Q}^2$. It is therefore necessary to ask to which extent the output of Algorithm~\ref{alg:NPA} depends on the total order of the input. In turns out that if the edge polynomial of $m(X)$ associated with an edge $e$ is square-free, which is the case for any generic polynomial, then the series roots constructed from $e$ do not depend on the total order at all~\cite[Prop~6]{buchacher2022newton}. In particular, if all edge polynomials of $m(X)$ are square-free, then there are only finitely many series solutions. Furthermore, total orders that give rise to them can be read off from the barrier cones of the corresponding edges. If an edge polynomial is not square-free, then it is still an open question whether there are only finitely many series solutions, and how total orders that give rise to them can be found~\cite[Conj~2]{buchacher2022newton}. 

In order to effectively apply $[x^\geq y^\geq]$ to equation~\eqref{eq:orbitEquation1}, we need to bound the support of 
\begin{equation*}
\sum_{(p_1,p_2,p_3)} p_3(\phi) F(p_1(\phi),p_2(\phi)).
\end{equation*}
We do so by bounding the supports of its summands, since if $[x^\geq y^\geq] p_3(\phi) F(p_1(\phi),p_2(\phi)) = 0$ for all~$(p_1,p_2,p_3)$, then $F(x,y) = [x^\geq y^\geq] p(\phi)$. The derivation of the bounds will again rely on the Newton-Puiseux algorithm, and on Theorem~\ref{theorem:comp} below. We recall that the Newton-Puiseux algorithm is not only useful for constructing series solutions of polynomial equations but also for deriving information about the convex hull of their supports~\cite[Sec~5]{buchacher2022newton}. In particular, for each $p_i(\phi)$ in equation~\eqref{eq:orbitEquation1} we can compute the (finitely many) vertices of the convex hull of its support, and for each of these vertices $v$ we can determine a (strictly convex) cone $C$ such that $\mathrm{supp}(p_i(\phi))\subseteq v + C$. Again, in general it is an open problem~\cite[Prob~1]{buchacher2022newton} how to find the vertex cones, that is, the cones that are minimal. 

The Newton polyhedron of the product of two series is a subset of the Minkowski sum of their Newton polyhedra. In order to compute an estimate of the support of $p_3(\phi) F(p_1(\phi),p_2(\phi))$, it remains to clarify how to derive an estimate for the support of $F(p_1(\phi),p_2(\phi))$. The following theorem~\cite[Thm~17]{Manuel} gives a sufficient condition for the composition of Puiseux series to be well-defined, and in case it is, it provides a cone that contains its support. 

\begin{Theorem}\label{theorem:comp}
Let $C\subseteq\mathbb{R}^n$ be a strictly convex cone and $F(x_1,\dots,x_n)$ a series such that $\mathrm{supp}(F)\subseteq C$, let~$\preceq$ be an additive order on $\mathbb{Z}^m$ and $g_1,\dots,g_n\in\mathbb{C}_{\preceq}((y_1,\dots,y_m))\setminus \{0\}$. Furthermore, let $M\in\mathbb{Z}^{m\times n}$ be the matrix whose $i$-th column consists of the leading exponent of~$g_i(y_1,\dots,y_m)$ with respect to $\preceq$, and let $C'$ be a cone that contains the image of $C$ under~$M$ and $\mathrm{supp}(g_i / \mathrm{lt}(g_i))$ for $i=1,\dots,n$. If $C \cap \mathrm{ker}(M) = \{0\}$ and if $C'$ is strictly convex, then~$F(g_1,\dots,g_n)$ is well-defined and $\mathrm{supp}(F(g_1,\dots,g_n))\subseteq C'$.
\end{Theorem}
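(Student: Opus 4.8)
The plan is to reduce the statement about composition of Puiseux series in $\mathbb{C}_\preceq((y_1,\dots,y_m))$ to a bookkeeping argument about supports, exactly as in the classical one-variable case but carried out with cones in place of half-lines. First I would make precise what $F(g_1,\dots,g_n)$ is meant to denote: writing $F=\sum_{\beta\in C} a_\beta x^\beta$ and $g_i=\mathrm{lt}(g_i)\bigl(1+h_i\bigr)$ with $h_i=g_i/\mathrm{lt}(g_i)-1$ having support in the strictly convex cone $C'$ (after absorbing it into $C'$) and, crucially, with $h_i$ having \emph{strictly positive} valuation with respect to $\preceq$ — i.e. its leading exponent lies in the open half-space cut out by the order — I would set
\begin{equation*}
x^\beta \longmapsto \prod_{i=1}^n \mathrm{lt}(g_i)^{\beta_i}\,(1+h_i)^{\beta_i},
\end{equation*}
expanding each $(1+h_i)^{\beta_i}$ as its binomial series $\sum_{\ell\ge 0}\binom{\beta_i}{\ell} h_i^\ell$. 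The exponent $\prod_i \mathrm{lt}(g_i)^{\beta_i}$ contributes $M\beta$ to the total exponent, so the ``monomial part'' lands in $M(C)$, and the corrections $h_i^\ell$ contribute elements of $C'$; hence every monomial appearing in $F(g_1,\dots,g_n)$ has exponent in $M(C)+C'\subseteq C'$ (using that $C'$ already contains $M(C)$ and is a cone, hence closed under the relevant sums). The map $\beta\mapsto M\beta+\mathbb{Z}_{\ge0}$-combination of $h_i$-exponents is where the two hypotheses enter.

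Next I would verify that the resulting formal sum is actually an element of $\mathbb{C}_\preceq((y))$, i.e. that it is a well-defined series: this needs (a) local finiteness, that only finitely many of the $x^\beta$-images contribute to each fixed monomial $y^\gamma$, and (b) that the total support sits inside a translate of a single strictly convex rational cone intersected with a lattice $\tfrac1k\mathbb{Z}^m$. For (b) I take the cone to be $C'$ (strictly convex by hypothesis) translated by the sum of $\mathrm{val}_\preceq(\mathrm{lt}(g_i))$ over a generating set — more carefully, by $M v$ where $v$ is the apex of $C$ together with the finitely many apexes of the $\mathrm{supp}(h_i)$, all rational, and $k$ the lcm of the denominators involved. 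For (a) I argue by contradiction: if infinitely many pairs $(\beta,\ell_1,\dots,\ell_n)$ gave the same exponent $\gamma$, then projecting via $\preceq$ to a totally ordered value group, the $\preceq$-values $\langle M\beta,w\rangle + \sum_i \ell_i \langle \mathrm{val}(h_i),w\rangle$ would all equal $\langle\gamma,w\rangle$; since $\langle\mathrm{val}(h_i),w\rangle>0$ this bounds each $\ell_i$, and then $M\beta$ ranges over finitely many values, so $\beta\in C\cap M^{-1}(\text{finite set})$, which is finite precisely because $C\cap\ker(M)=\{0\}$ forces $M|_C$ to be finite-to-one on lattice points (a strictly convex cone meeting $\ker M$ only at $0$ has $M$ proper on it). This last implication — translating $C\cap\ker(M)=\{0\}$ into finiteness of fibres of $M$ restricted to $C\cap\tfrac1k\mathbb{Z}^n$ — is the technical heart and the step I expect to be the main obstacle; it is a statement about rational polyhedral cones (if $M\beta=M\beta'$ with $\beta,\beta'\in C$ then $\beta-\beta'\in\ker M$, and one shows the set of such differences inside $C-C$ with bounded $M$-image is bounded, using strict convexity of $C$ and $C\cap\ker M=\{0\}$), and it is exactly the multivariate analogue of ``a series in $x$ with positive valuation can be substituted into a Laurent series.''

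Finally, with well-definedness established, the support bound $\mathrm{supp}(F(g_1,\dots,g_n))\subseteq C'$ is immediate from the monomial-by-monomial analysis above: each contributing monomial has exponent of the form $M\beta+\sum_i(\text{element of }\mathrm{supp}(h_i^{\ell_i}))$ with $\beta\in C$, and both $M(C)$ and each $\mathrm{supp}(h_i)\subseteq\mathrm{supp}(g_i/\mathrm{lt}(g_i))$ lie in $C'$ by the definition of $C'$, so by closure of the cone $C'$ under addition of its own elements the whole support lies in $C'$. I would remark that one should double-check the edge cases where some $g_i$ is a monomial (then $h_i=0$ and that factor contributes only to $M$), and where $C$ is not full-dimensional, but neither causes trouble. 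The only genuinely delicate point, as said, is the properness of $M$ on the rational cone $C$, for which the hypothesis $C\cap\ker(M)=\{0\}$ is tailor-made; I would cite or adapt the corresponding lemma from~\cite{Manuel}.
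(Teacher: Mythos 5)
The paper does not supply its own proof of this theorem; it is cited verbatim from~\cite[Theorem~17]{Manuel}, so there is no in-paper argument to compare yours against. Judged on its own merits, your overall strategy is the natural one: factor $g_i = \mathrm{lt}(g_i)(1+h_i)$, expand binomially, observe that every resulting exponent is $M\beta$ plus a sum of elements of the $\mathrm{supp}(h_i)$'s and hence lies in $M(C)+C'\subseteq C'$, and reduce well-definedness to a local-finiteness count plus the properness of $M$ on $C$. You also correctly single out $C\cap\ker M=\{0\}$ as precisely the hypothesis that makes the fibres $C\cap M^{-1}(u)$ bounded (hence finite on lattice points), via a recession-cone argument.

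The local-finiteness step, however, has a genuine gap as written. You argue that a term contributing to exponent $\gamma$ gives $\langle M\beta,w\rangle+\sum_i\ell_i\langle\mathrm{val}(h_i),w\rangle=\langle\gamma,w\rangle$; but a monomial of $h_i^{\ell_i}$ has exponent equal to a sum of $\ell_i$ elements of $\mathrm{supp}(h_i)$, each of which is only \emph{bounded} by $\mathrm{val}(h_i)$ in the order, not equal to it, so your displayed equality should be an inequality. Worse, even granting the inequality, concluding that the $\ell_i$ are bounded tacitly uses that $\langle M\beta,w\rangle$ is bounded on the appropriate side, i.e.\ that $M(C)$ lies in a half-space cut out by~$\preceq$; this is not among the stated hypotheses and you do not derive it. A cleaner way to close the gap bypasses the $\ell_i$ entirely: any $\beta$ contributing to $\gamma$ has $M\beta\in C'\cap(\gamma-C')$, and this set is bounded because $C'$ is strictly convex (an unbounded direction would give a nonzero $d\in C'\cap(-C')$), hence meets the relevant lattice in finitely many points; then your fibre-finiteness of $M|_C$ finishes. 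Two minor remarks: the paper's convention makes $\mathrm{lt}$ the $\preceq$-\emph{maximal} term, so the $h_i$ have exponents strictly $\prec 0$ rather than ``strictly positive valuation''; and since $C$ is a cone with apex $0$, the support of $F(g_1,\dots,g_n)$ lies in $C'$ itself and no translate is needed.
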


We are interested in applying Theorem~\ref{theorem:comp} when $F\in\mathbb{C}[x,y][[t]]$ satisfies $[t^0] F = 1$, and $g_1,g_2\in\mathbb{C}_{\preceq}((x,y))$ and $g_3 = t$. In this case the assumptions of the theorem are always fulfilled.

\begin{Corollary}\label{lemma:1}
Let $C$ be a strictly convex cone in $\mathbb{R}^3$ such that $C\cap \left(\mathbb{R}^2\times \{0\}\right) = \{0\}$, and let $F\in\mathbb{C}[x,y][[t]]$ be such that $\mathrm{supp}(F)\subseteq C$. Let $\preceq$ be an additive total order on $\mathbb{Q}^3$ and $g_1,g_2\in\mathbb{C}_{\preceq}((x,y))$, and let $M$ be the matrix whose columns are the leading exponents of $g_1,g_2$ and $t$. Then $C\cap \mathrm{ker}(M) = \{0\}$, and the cone generated by $MC$ and $\mathrm{supp}(g_i/\mathrm{lt}(g_i))$ for~$i\in\{1,2\}$ is strictly convex.
\end{Corollary}
\begin{proof}
The series $g_1$ and $g_2$ do not depend on $t$, therefore $\mathrm{ker}(M) \subseteq \mathbb{R}^2\times \{0\}$, and so~$C\cap \mathrm{ker}(M) = \{0\}$, by assumption on $C$. Since $g_1$ and $g_2$ are elements of $\mathbb{C}_{\preceq}((x,y))$, the cone generated by the support of $g_1/\mathrm{lt}(g_1)$ and $g_2/\mathrm{lt}(g_2)$ is strictly convex, and because $g_1$ and $g_2$ are independent of $t$, it is contained in $\mathbb{R}^2\times \{0\}$. The shape of $M$ implies $MC \cap\left( \mathbb{R}^2\times \{0\}\right) = M\left(C \cap\left( \mathbb{R}^2\times \{0\}\right) \right) =  \{0\}$. To finish the proof of the lemma, it is therefore sufficient to show that $MC$ is strictly convex. Assume that there is a~$v\neq 0$ such that $v\in MC$ and $-v\in MC$. Then there are $u_1,u_2\in C$ such that~$Mu_1=v$ and $Mu_2=-v$. But then $M(u_1+u_2) = 0$, i.e. $u_1+u_2\in\mathrm{ker}(M)$. Together with~$u_1+u_2\in C$ and $C\cap\mathrm{ker}(M) = \{0\}$ this implies that $u_1 + u_2 = 0$. Since~$C$ is strictly convex, $u_1,u_2=0$, and therefore $v=0$. So $MC$ is strictly convex as well.
\end{proof}

To summarize, we can construct series roots $\phi$ of $m(X)$ to embed $\mathbb{C}(x,y)[X]/\langle m(X) \rangle$ into fields $\mathbb{C}_{\preceq}((x,y))$ of Puiseux series and interpret equation~\eqref{eq:orbitEquation} as an identity of series. For each such embedding we can determine the vertices of the convex hull of the support of $p_i(\phi)$, and for each of these vertices $v$ we can compute a strictly convex rational cone $C_v$ such that $\mathrm{supp}(p_i(\phi))\subseteq v + C_v$. Furthermore, we are able to find a strictly convex cone $C$ such that $\mathrm{supp}(F(p_1(\phi),p_2(\phi))) \subseteq C$. The support of $p_3(\phi) F(p_1(\phi),p_2(\phi))$ is then contained in $v + C_v + C$. If $\left( \mathbb{Q}_{\geq 0}^2 \times \mathbb{Q}\right) \cap \left( v + C_v+C\right) = \emptyset$, then $[x^\geq y^\geq] p_3(\phi) F(p_1(\phi),p_2(\phi)) = 0$.

\begin{Example}\label{ex:system1}
We solve the system of discrete differential equations 
\begin{align}\label{eq:system}
\begin{split}
F_0 &= 1 + t F_1 + t \Delta_x \Delta_y F_1\\
F_1 &= t(1+x+y)F_0 + ty \Delta_x F_0
\end{split}
\end{align}
for $F_0,F_1\in\mathbb{C}[x,y][[t]]$ and show that their solution is D-finite. We begin with eliminating~$F_1(x,y)$ from the first of these equations and continue working with
\begin{equation}\label{eq:kernelEqu}
(1-t^2S_0 S_1) F_0 = 1 - t\bar{x}\bar{y} (F_1(x,0) + F_1(0,y) - F_1(0,0)) - t^2(\bar{x}\bar{y}+1)\bar{x}y F_0(0,y),
\end{equation}
where 
\begin{equation*}
S_0 := \bar{x}y+y+x+1 \quad \text{and} \quad S_1 := \bar{x}\bar{y} + 1.
\end{equation*}
The Laurent polynomial $S_0S_1$ has a finite orbit. Its elements are $(x,y),(x,\bar{y})$, $(p_1(\alpha),y)$ and $(p_{-1}(\alpha),y)$, and $(p_1(\alpha),\bar{y})$ and $(p_{-1}(\alpha),\bar{y})$, where
\begin{equation*}
p_i(X) = \frac{x + y + x y + x y^2 + i X}{2 x^2 y} \quad \text{and} \quad \alpha = \sqrt{4 x^3 y^2 + (x + y + x y + x y^2)^2}.
\end{equation*}
We consider their components as elements of the extension of $\mathbb{C}(x,y)$ by a root $\alpha$ of
\begin{equation*}
m(X) = X^2 - 4 x^3 y^2 - (x + y + x y + x y^2)^2.
\end{equation*}
Plugging the elements of the orbit into equation~\eqref{eq:kernelEqu},
forming a linear combination of the resulting equations with undetermined coefficients, and equating the  coefficients of the sections of~$F_0$ and $F_1$ to zero results in a linear system over $\mathbb{C}(x,y)[\alpha]$. The vector space of solutions is $1$-dimensional, and so is the vector space of section-free orbit equations. The latter is generated by the equation
\begin{equation*}
F_0(x,y) - \bar{y}^2 F_0(x,\bar{y}) - \sum_{i,j = \pm1} c_{ij}(\alpha) F_0(p_i(\alpha),y^j) = \frac{(-1 + y^2) (2 y - x^3 y + x (1 + y + y^2))}{x^3 y^3(1 - t^2 S_0 S_1)}.
\end{equation*}
The coefficients $c_{ij}(\alpha)$ in $\mathbb{C}(x,y)[\alpha]$ are
\begin{align*}
c_{ij}(\alpha) = i \frac{x + 2 y + x y + x y^2}{2 x^3 y} + j \frac{\alpha (2 y^2 + 2 x^3 y^2 + 3 x y (1 + y + y^2) + x^2 (1 + y + y^2)^2)}{2 x^3 y (y^2 + 4 x^3 y^2 + 2 x y (1 + y + y^2) + x^2 (1 + y + y^2)^2)}.
\end{align*}
Let $\preceq$ be the total order on $\mathbb{Q}^2$ defined by $w = (\sqrt{2},1/2)$, and let $\phi$ be the series solution of $m(X) = 0$ in $\mathbb{C}_{\preceq}((x,y))$ whose first term is $2x^{3/2}y$.
We identify $p_i(\alpha)$ and~$c_{ij}(\alpha)$ with $p_i(\phi)$ and $c_{ij}(\phi)$ in $\mathbb{C}_{\preceq}((x,y))$ and show that the only term on the left hand side of the equation that remains when applying $[x^{\geq}y^{\geq}]$ is $F_0(x,y)$. Consequently, $F_0$ is the non-negative part of a rational function, and therefore D-finite. By the second of the equations in~\eqref{eq:system} so is then $F_1$. Obviously,~$[x^{\geq}y^{\geq}] F_0(x,y) = F_0(x,y)$ and $[x^{\geq}y^{\geq}] \bar{y}^2F_0(x,\bar{y}) = 0$. Using the Newton-Puiseux algorithm, one can show that 
\begin{equation*}
\mathrm{supp}(p_i(\phi)) \subseteq (-1/2,0,0) + \langle (-1,2,0),(-1,-2,0) \rangle
\end{equation*}
and 
\begin{equation*}
\mathrm{supp}(c_{ij}(\phi)) \subseteq (-3/2,-1+j,0) + \langle (-1,2,0),(-1,-2,0) \rangle.
\end{equation*}
Theorem~\ref{theorem:comp} then implies that
\begin{equation*}
\mathrm{supp}(F_0(p_i(\phi),y^j))\subseteq \langle (0,0,1),(0,j,1),(-1,2,0),(-1,-2,0) \rangle.
\end{equation*}
Therefore, 
\begin{equation*}
\mathrm{supp}(c_{ij}(\phi)F_0(p_i(\phi),y^j)) \subseteq (-3/2,-1+j,0) + \langle (0,0,1),(0,j,1),(-1,2,0),(-1,-2,0) \rangle,
\end{equation*}
and so 
\begin{equation*}
[x^{\geq}y^{\geq}] c_{ij}(\phi) F_0(p_i(\phi),y^j) = 0.
\end{equation*}
\end{Example}

The following algorithm gives an effective sufficient condition for the application of $[x^\geq y^\geq]$ to the orbit-sum~\eqref{eq:orbitEquation} to result in an expression of $F$ as the non-negative part of an algebraic function. 

\begin{Algorithm}\label{alg:ppe}
  Input: a series root $\phi$ of $m(X)$ and a cone $C_0$ that contains $\mathrm{recCone}(\mathrm{Newt}(\phi))$, a cone $C_1$ that contains the support of a series $F\in\mathbb{C}[x,y][[t]]$ such that $C_1\cap\left(\mathbb{R}^2\times\{0\}\right) = \{0\}$ and a method for computing the coefficient of any term of~$F$, and a list $L_0$ of tuples $(p_1(X),p_2(X),p_3(X))$ of polynomials over $\set Q(x,y)$.
  
Output: True or Failed, with the output being True only if there is a total order $\preceq$ such that the non-negative part of $p_3(\phi) F(p_1(\phi),p_2(\phi))$ in $\set C_{\preceq}((x,y))[[t]]$ is zero for all $(p_1(X),p_2(X),p_3(X))$ in~$L_0$.
  
 \step 10 Determine the maximal list $L_1$ of minimal cones $C$ such that for every polynomial $p(X)$ which appears as a component of an element of $L_0$ its series expansion in $\mathbb{C}_{\preceq}(x,y)[X]$ does only depend on the cone $C$ but not on the specific total order $\preceq$ induced by an element of~$C^*$.
 \step 20 For each $C\in L_1$ such that $C + C_0$ is strictly convex, do:
 \step 31 Choose any total order $\preceq$ on $\mathbb{Q}^2$ induced by some element of $(C+C_0)^*$, and determine for each $p(X)$ which appears as a component of an element of $L_0$ a list $L_p$ of pairs $(v_p,C_{v_p})$ such that $v_p$ is a vertex of the convex hull of the support of $p(\phi)$ in $\mathbb{C}_{\preceq}((x,y))$ and $C_{v_p}$ is an estimate of the corresponding vertex cone.
 \step 41 If for each $(p_1(X),p_2(X),p_3(X))$ in $L_0$ there are $(v_{p_i},C_{v_{p_i}})$ in $L_{p_i}$ such that for the cone $C'$ computed from $C_1$ and $(v_{p_1},C_{v_{p_1}})$ and $(v_{p_2},C_{v_{p_2}})$ using Theorem~\ref{theorem:comp}, the set
 \begin{equation*}
   \left( \mathbb{Q}_{\geq 0}^2\times \mathbb{Q}\right) \cap \left(v_{p_3} + C_{v_{p_3}} + C' \right)
 \end{equation*}
 is bounded, and for each of its elements $(i,j,n)$, the coefficient of $x^i y^j t^n$ in $p_3(\phi)F(p_1(\phi),p_2(\phi))$
 is zero, then return True.
  \step 50 Return Failed. 
\end{Algorithm}

\begin{Theorem}
  Alg.~\ref{alg:ppe} is correct in the sense that if it returns True,
  then there is a total order $\preceq$ such that $[x^{\geq} y^{\geq}]p_3(\phi) F(p_1(\phi),p_2(\phi)) = 0$ in $\set C_{\preceq}((x,y))[[t]]$ for all $(p_1(X),p_2(X),p_3(X))$ in~$L_0$.
\end{Theorem}
\begin{proof}
  Suppose the algorithm returns True, and let $\preceq$ be the total order chosen in step~3 in
  the iteration when the algorithm terminated.
  If $w\in (C+C_0)^*$ is a weight vector associated with~$\preceq$, then $w\in C_0^*$,
  and therefore $\phi\in \set C_{\preceq}((x,y))$.
  By Corollary~\ref{lemma:1}, it also follows that $p_3(\phi)F(p_1(\phi),p_2(\phi))\in\set C_{\preceq}((x,y))[[t]]$
  for every $(p_1(X),p_2(X),p_3(X))\in L_0$.
  It remains to show that the non-negative parts of these series are all zero.

  For every $(p_1(X),p_2(X),p_3(X))$ in $L_0$,
  the support of $F(p_1(\phi),p_2(\phi))$ is restricted to the~$C'$ considered in step~4.
  The support of $p_3(\phi)$ is restricted to $v_{p_3}+C_{v_{p_3}}$ as chosen in step~3.
  So the support of $p_3(\phi)F(p_1(\phi),p_2(\phi))$ is restricted to
  $v_{p_3}+C_{v_{p_3}}+C'$. By the termination condition, the
  coefficients of the terms whose exponent vectors are elements of $\mathbb{Q}_{\geq 0}^2\times \mathbb{Q}$ are zero, which implies that $[x^{\geq} y^{\geq}]p_3(\phi) F(p_1(\phi),p_2(\phi)) = 0$.  
\end{proof}

If the algorithm returns Failed although True would have been a correct output, this may have several
reasons. One possible explanation is that one of the cones provided as input or chosen in step~3 or
computed according to Theorem~\ref{theorem:comp} were not tight. It is not surprising that oversized cones may
cause the set $\left( \mathbb{Q}_{\geq 0}^2\times \mathbb{Q}\right) \cap \left(v_{p_3} + C_{v_{p_3}} + C'\right)$
to be infinite even though tighter choices for the cones may have led to a bounded set. The following example indicates that the algorithm may also return Failed when all cones are chosen optimally.

\begin{Example}\label{ex:tight}
Consider the set 
\begin{equation*}
S := \{(-1,0,0)\} \cup \{ (-1,-1,0) + k\cdot (1,0,1) : k\in\mathbb{N} \}.
\end{equation*}
Though it does not intersect $\mathbb{Q}_{\geq 0}^2\times \mathbb{Q}$, the intersection of its convex hull with $\mathbb{Q}_{\geq 0}^2\times \mathbb{Q}$ is
\begin{equation*}
\mathrm{conv}\{ (-1,0,0) + k\cdot (1,0,1): k\in\mathbb{N}^* \},
\end{equation*}
and hence unbounded.
\end{Example}

Even if we had an algorithm that would return Failed only when True is not a legitimate answer, this would still
not be enough to cover all cases in which the orbit-sum method is applicable.
In~\cite[Prop~24]{large} it was shown that it can happen that there are two terms $p_3(\alpha)F(p_1(\alpha),p_2(\alpha))$ whose expansions involve terms with non-negative powers in $x$ and $y$, although their sum does not. 

\section{Conclusion}
We have extended the applicability of the orbit-sum method for linear DDE's of higher order. However, there remain many equations where the method fails. First, there are equations which do not admit a solution by the orbit-sum method, simply because the shape of the equation does not allow the method to conclude. In some cases, for instance, the orbit is not finite~\cite[Thm~3]{smallSteps}, and in others the orbit is finite but there is no section-free orbit equation (Example~\ref{ex:system2}), and again for others the section-free orbit equations only have a zero orbit-sum~\cite[Sec~4.2]{smallSteps}. 

\begin{Example}\label{ex:system2}~\cite[Sec~6.10, Ex~15]{buchacher2021algorithms}\\
Similar as in Example~\ref{ex:system1} we try to solve the system of discrete differential equations
\begin{align}\label{eq:system2}
\begin{split}
F_0 &= 1 + t F_1 + tx \Delta_y F_1\\
F_1 &= t y F_0 + t \Delta_x F_0\\
\end{split}
\end{align}
for $F_0,F_1\in\mathbb{Q}[x,y][[t]]$. We begin with eliminating~$F_1(x,y;t)$ from the first of these equations and continue working with
\begin{equation}\label{eq:kernelEqu2}
(1-t^2S_0 S_1) xyF_0 = xy - t x^2 F_1(x,0) - t^2(x+y)F_0(0,y),
\end{equation}
where 
\begin{equation*}
S_0 := \bar{x} + y \quad \text{and} \quad S_1 := 1 + x \bar{y}.
\end{equation*}
The Laurent polynomial $S_0S_1$ has a finite orbit. Its elements are $(x,y)$, $(\bar{x},y)$, $(\bar{x},\bar{y})$ and $(x,\bar{y})$. Replacing $(x,y)$ in equation~\eqref{eq:kernelEqu2} by these gives four equations, which cannot be non-trivially linearly combined to cancel the section. The same is true when working with the equation that results from eliminating $F_0$ from the system of equations~\eqref{eq:system2}. Hence the orbit-sum method cannot conclude.
\end{Example}

Second, there are equations for which the orbit-sum method as presented here fails because we were not able to address some of the problems that can arise. For instance, if there is essentially more than one section-free orbit equation it is not clear which of them should be chosen to extract the non-negative part. The method clearly fails, if the estimates of the order cone of $\phi$ and the supports of~$p_3(\phi)F(p_1(\phi),p_2(\phi))$ are too big. How to determine estimates that are tight, and how to deal with examples such as Example~\ref{ex:tight} and~\cite[Prop~24]{large}, are questions that remain open.

\printbibliography

\end{document}